\DeclareFontFamily{U}{BOONDOX-calo}{\skewchar\font=45 }
\DeclareFontShape{U}{BOONDOX-calo}{m}{n}{
  <-> s*[1.05] BOONDOX-r-calo}{}
\DeclareFontShape{U}{BOONDOX-calo}{b}{n}{
  <-> s*[1.05] BOONDOX-b-calo}{}
\DeclareMathAlphabet{\mathcalboondox}{U}{BOONDOX-calo}{m}{n}
\SetMathAlphabet{\mathcalboondox}{bold}{U}{BOONDOX-calo}{b}{n}
\DeclareMathAlphabet{\mathbcalboondox}{U}{BOONDOX-calo}{b}{n}
\newcommand{\Des}{{\rm{Des}}}
\newcommand{\des}{{\rm{des}}}
\newtheorem{thm}{Theorem}[section]
\newtheorem{pro}[thm]{Proposition}
\newtheorem{lem}[thm]{Lemma}
\newtheorem{cla}[thm]{Claim}
\newtheorem{cor}[thm]{Corollary}
\theoremstyle{definition}
\newtheorem{obs}[thm]{Observation}
\newtheorem{exa}[thm]{Example}
\newtheorem{example}[thm]{Example}
\newtheorem{defn}[thm]{Definition}
\newtheorem{algo}[thm]{Algorithm}
\newtheorem{prop}[thm]{Proposition}
\newtheorem{conj}[thm]{Conjecture}
\newcommand{\een}{\end{enumerate}}
\newcommand{\blem}{\begin{lem}}
\newcommand{\elem}{\end{lem}}
\newcommand{\bcl}{\begin{cla}}
\newcommand{\ecl}{\end{cla}}
\newcommand{\ethm}{\end{thm}}
\newcommand{\bpr}{\begin{pro}}
\newcommand{\epr}{\end{pro}}
\newcommand{\bco}{\begin{cor}}
\newcommand{\eco}{\end{cor}}
\newcommand{\bcon}{\begin{conj}}
\newcommand{\econ}{\end{conj}}
\newcommand{\bde}{\begin{defn}}
\newcommand{\ede}{\end{defn}}
\newcommand{\bex}{\begin{exa}}
\newcommand{\eexa}{\end{exa}}
\newcommand{\bobs}{\begin{obs}}
\newcommand{\eobs}{\end{obs}}
\newcommand{\bexe}{\begin{exe}}
\newcommand{\eexe}{\end{exe}}
\begin{document}

\title[Worpitzky identity for Coxeter groups of types $B$ and $D$]{The Worpitzky identity for the groups of signed and even-signed permutations}

\author{Eli Bagno, David Garber and Mordechai Novick}

\address{Eli Bagno and Mordechai Novik, Jerusalem College of Technology\\
21 Havaad Haleumi St. Jerusalem, Israel}
\email{bagnoe@jct.ac.il,mnovick@jct.ac.il}

\address{David Garber \\
Department of  Applied Mathematics, Holon Institute of Technology,
52 Golomb St., PO Box 305, 58102 Holon, Israel}
\email{garber@hit.ac.il}

\begin{abstract}
The well-known Worpitzky identity
provides a connection between two bases of $\mathbb{Q}[x]$: The standard basis $(x+1)^n$ and the binomial basis ${{x+n-i} \choose {n}}$, where the Eulerian numbers for the Coxeter group of type $A$ (the symmetric group) serve as the entries of the transformation matrix.
Brenti has generalized this identity to the Coxeter groups of types $B$ and $D$ (signed and even-signed permutations groups, respectively)
using generating function techniques.

Motivated by Foata-Sch\"utzenberger and Rawlings' proof for the Worpitzky identity in the symmetric group, we provide combinatorial proofs of this identity and for their $q-$analogues in the Coxeter groups of types $B$ and $D$.
\end{abstract}

\maketitle

\section{Introduction}

The well-known Worpitzky identity involves the Eulerian numbers, the original definition of which was given by Euler in an analytic context \cite[\S 13]{Eu}. Later, these numbers began to appear in combinatorial problems, and this is the context we choose to present them here.

Let $S_n$ be the symmetric group on $n$ elements. For any permutation $\pi\in S_n$, we say that $\pi$ has a {\em descent} at position $i$ if $\pi(i)>\pi(i+1)$, and we denote by ${\rm Des}(\pi)$ the set of descents:
\begin{equation}\label{des_typeA}
{\rm Des}(\pi):=\{i \in [n-1]\mid \pi(i)>\pi(i+1)\}.
\end{equation}
We denote the number of descents in $\pi$ by ${\des}(\pi):=|\Des(\pi)|$.

The Eulerian number $A(n,k)$ counts the number of permutations in $S_n$ having $k$ descents:
$$A_{n,k}=|\{\pi \in S_n : {\rm des}(\pi)=k\}|.$$

One of the celebrated identities involving Eulerian numbers is the Worpitzky identity:
\begin{equation}\label{Worpitzky_A}
(k+1)^n = \sum\limits_{i=0}^{n-1} A_{n,i} {{k+n-i} \choose {n}}.
\end{equation}

An excellent overview of the Worpitzky identity and the Eulerian numbers can be found in Petersen's book \cite[Chap. 1]{petersen}.

Worpitzky's identity was generalized to the Coxeter groups of types $B$ and $D$ in Borowiec and M{\l}otkowski \cite{BoMl}, though they used a different set of Eulerian numbers.

Generalizations of the Worpitzky identity, using the algebraic definition of the descents in these groups, were introduced by Brenti \cite[Theorem 3.4(iii) for $q=1$ and Corollary 4.11]{Brenti}:

$$(2x+1)^{n}=\sum_{k=0}^n{{x+n-k} \choose {n}}B_{n,k} \qquad ({\rm type}\ B),$$

$$(2x+1)^{n}-2^{n-1}(\mathcalboondox{B}_n(x+1)-\mathcalboondox{B}(n))=\sum_{k=0}^n{{x+n-k} \choose {n}}D_{n,k}\qquad ({\rm type}\ D),$$
where $B_{n,k}$ and $D_{n,k}$ are the Eulerian numbers of types $B$ and $D$ respectively (these notations will be defined in the next section), $\mathcalboondox{B}(n)$ is the $n$-th Bernoulli number and $\mathcalboondox{B}_n(x)$ is the $n$-th Bernoulli polynomial (see \cite{Mezo} for the definitions of these concepts).
\medskip

Combinatorial identities usually have more than one possible proof. Some of them are analytic, some algebraic in nature, but the most beautiful ones are combinatorial, meaning that both sides of the identity count the same set of elements in different ways.

In our context, Foata and Sch\"utzenberger \cite[p. 40]{FoSc} have proved the Worpitzky identity for the Coxeter group of type $A$ in a combinatorial way (see also Rawlings \cite{Rawlings} and Petersen \cite[p. 366]{petersen}).
On the other hand, Brenti's proofs for the generalizations of Worpitzky's identities are non-combinatorial, and use generating function techniques.

\medskip

Our contribution in this paper is combinatorial proofs for the $q$-analogues of the Worpitzky identity for types $B$ and $D$:
$$(1+(1+q)m)^n=\sum\limits_{k=0}^n{{n+m-k}\choose {n}}B_{n,k}(q) \qquad ({\rm type}\ B)$$
{\small$$\begin{array}{l}(1+2m)((1+q)m)^{n-1}-(1+q)^{n-1}(\mathcalboondox{B}_n(m+1)-\mathcalboondox{B}(n))=\\  \qquad\qquad\qquad\qquad\qquad\qquad =\sum\limits_{k=0}^n{{n+m-k}\choose {n}}D_{n,k}(q)\end{array} \quad ({\rm type}\ D)$$}

These $q$-analogues appear in Brenti \cite{Brenti} (the identity for type $B$ is Theorem 3.4(iii) and the identity for type $D$ is referred to implicitly before Theorem 4.10).

Our combinatorial proofs are in the same spirit of the proof of Foata-\break Sch\"utzenberger \cite{FoSc} for type $A$.  In both types, we count vectors of length $n$ over a certain set, and in the case of type $D$ we encounter a problem of missing vectors, which is corrected by adding a term of the form of a Bernoulli polynomial. This phenomenon appears regarding other statistics in Coxeter groups of type $D$, see e.g. \cite{BBG}.

The proof of the identity for type $D$ integrates direct combinatorial arguments together with formal manipulations of algebraic expressions, sometimes referred to as ``manipulatorics'', see \cite[p. 10]{petersen}. This approach is, in general and in our case as well, less satisfying, but is often necessary to reformulate a complex bijective argument as an elegant formula.

\medskip

The paper is organized as follows. In Section \ref{prelim} we give some preliminaries, including the definitions of the Coxeter groups of types $B$ and $D$ and the Eulerian numbers associated with them.
Sections \ref{section type B} and \ref{section type D} present the combinatorial proofs of the identities for types $B$ and $D$ respectively.

\section{Preliminaries and definitions}\label{prelim}

In this section, we provide some background on the Coxeter groups of types $B$ and $D$, and on the sets of vectors which we will count in our proofs of the Worpitzky identities for these groups. A general reference is Chapter 8 of Bjorner-Brenti's book \cite{Bjorner-Brenti}.

\subsection{The Coxeter group of type B}
Define $B_n$ as the group of \textit{signed permutations} on $\{1,\dots,n\}$, i.e., the set of permutations $\pi$ on $\{\pm 1, \pm 2,\dots,\pm n\}$ such that $\pi(-i)=-\pi(i)$ for $1 \leq i \leq n$. We consent that $\pi(0)=0$ and occasionally  write $\pi_i$ instead of $\pi(i)$.
This is the standard combinatorial realization of the Coxeter groups of type $B$.

We define some statistics on the group $B_n$.
First, for a permutation $\pi\in B_n$, define: $$\Des_A(\pi) = \{i:\pi(i)>\pi(i+1),1\leq i\leq n-1\},$$
and then denote: $\des_A(\pi)=\lvert\Des_A(\pi)\rvert.$

\medskip

Now, for $\pi\in B_n$, we define:
$$\Des_B(\pi)=\left\{\begin{array}{cl} \Des_A(\pi)\cup\{0\} & \pi(1)<0 \\ \Des_A(\pi) & \pi(1)>0 \end{array}\right.$$
As before, we denote: $\des_B(\pi)=\lvert\Des_B(\pi)\rvert$.
\medskip
\begin{example}
Let $\pi=[\bar{1}2\bar{5}43]$.
Then ${\rm Des}_B(\pi)=\{0,2,4\}$
\end{example}

Let $B_{n,k}=\lvert\{\pi\in B_n:\des_B(\pi)=k\}\rvert$.
The number $B_{n,k}$ is called the {\it Eulerian number of type $B$}. These numbers constitute the sequence A060187 in OEIS \cite{OEIS}.

We define also another statistic:
$${\rm neg}(\pi)=|\{i : \pi(i)<0, 1\leq i \leq n\}|,$$ and the $q$-analogue of $B_{n,k}$ is:
$$B_{n,k}(q)=\sum\limits_{{\tiny\begin{array}{c} \pi\in B_n\\ \des_B(\pi)=k \end{array}}}q^{{\rm neg}(\pi)}.$$

\begin{example}
Let $\pi=[\bar{1}2\bar{5}43]$.
Then ${\rm neg}(\pi)=2$.
\end{example}

\subsection{The Coxeter group of type D}

Denote by $D_n$ the group of signed permutations on $\{1,\dots,n\}$ with an even number of negative elements. This is the standard combinatorial realization of the Coxeter group of type $D$.

Before presenting the Eulerian numbers  for $D_n$, we need the following definitions:
For $\pi\in D_n$, define: $$\Des_D(\pi)=\left\{\begin{array}{cl} \Des_A(\pi)\cup\{0\} &  \pi(1)+\pi(2)<0 \\ \Des_A(\pi) & \pi(1)+\pi(2)>0 \end{array}\right.$$
and denote: $\des_D(\pi)=\lvert\Des_D(\pi)\rvert$.

\begin{example}
Let $\pi=[\bar{3}26\bar{5}14]$.
Then: ${\rm Des}_D(\pi)=\{0,3\}$ and\break ${\rm des}_D(\pi)=2$.
\end{example}

Let $D_{n,k}=|\{\pi \in D_n : {\rm des}_D(\pi)=k\}|$ be the {\it Eulerian number of type $D$} (sequence A262226 in OEIS \cite{OEIS}). For the $q-$analogue,
let $$D_{n,k}(q)=\sum\limits_{{\tiny\begin{array}{c} \pi\in D_n\\ \des_D(\pi)=k \end{array}}}q^{{\rm neg}_2(\pi)},$$
where $${\rm neg}_2(\pi)=|\{i\in \{2,\dots,n\} \mid \pi(i)<0\}|.$$
In the last example, ${\rm neg}_2(\pi)=1$.

\subsection{Definitions for vectors}
Define on the alphabet
$$\Sigma=\{ {\bf 0}, \pm 1, \pm 2, \dots , \pm m \},$$
the following linear order (which will henceforth be referred to simply as the ``defined order"):
$$ {\bf 0} \prec -1 \prec 1 \prec -2 \prec 2 \prec \dots \prec -m \prec m.$$

In the context of the vectors, we define two different versions of the parameter neg, which counts the number of negative elements, one for type $B$ and the other for type $D$.

\begin{defn}
The parameter ${\rm neg}(\vec{v})$ is defined to be the number of negative entries in $\vec{v}$.

The parameter ${\rm neg}_2(\vec{v})$ is defined to be the number of negative entries in $\vec{v}$, excluding the smallest element of $\vec{v}$ with respect to the order defined above  (note the difference between this definition and the definition of ${\rm neg}_2$ for a permutation).
\end{defn}

\section{The Worpitzky identity for type $B$}\label{section type B}
For the Coxeter group $B_n$, the following identity was proven by Brenti \cite[Theorem 3.4(iii)]{Brenti}:
\begin{thm}\label{worp type B}
$$(1+(1+q)m)^n=\sum\limits_{k=0}^n{{n+m-k}\choose {n}}B_{n,k}(q).$$
\end{thm}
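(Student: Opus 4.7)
The plan is to count the weighted vectors $\vec{v}\in\Sigma^n$, each carrying weight $q^{{\rm neg}(\vec{v})}$, in two different ways. Since each coordinate of $\vec{v}$ contributes independently $1$ (from $\mathbf{0}$), $m$ (from the positive entries) and $qm$ (from the negative entries), the total weight is $(1+(1+q)m)^n$, which is the left-hand side. To recover the right-hand side, I would organize these vectors by associating to each $\vec{v}$ a signed permutation $\pi\in B_n$ together with a compatible weakly increasing sequence $\vec{u}$.

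The bijection I have in mind sends $\vec{v}$ to a pair $(\pi,\vec{u})$ as follows. First, sort the entries of $\vec{v}$ in the defined order $\prec$ to obtain $v^{\rm sort}_1\preceq\cdots\preceq v^{\rm sort}_n$, breaking ties stably (earlier original index first) for non-negative entries and in the reversed order (later index first) for negative entries. Set $\vec{u}=(|v^{\rm sort}_1|,\ldots,|v^{\rm sort}_n|)$, which is automatically weakly increasing in $\{0,1,\ldots,m\}^n$ since $\prec$ respects absolute values. Letting $\sigma^{-1}(j)$ denote the original position in $\vec{v}$ of the $j$-th sorted entry, define
\[
\pi(j) = {\rm sign}(v^{\rm sort}_j)\cdot \sigma^{-1}(j),
\]
with entries equal to $\mathbf{0}$ assigned sign $+$. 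The negatives of $\vec{v}$ are then in bijection with the negative values of $\pi$, giving ${\rm neg}(\vec{v})={\rm neg}(\pi)$, so the $q$-weight is preserved. The inverse map places ${\rm sign}(\pi(j))\cdot u_j$ at position $|\pi(j)|$ of $\vec{v}$.

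The main obstacle is showing that the image of this map is exactly the set of pairs $(\pi,\vec{u})$ with $\vec{u}$ weakly increasing in $\{0,1,\ldots,m\}^n$, satisfying $u_i<u_{i+1}$ whenever $i\in\Des_B(\pi)\cap\{1,\ldots,n-1\}$ and $u_1>0$ whenever $0\in\Des_B(\pi)$. This reduces to a case analysis on the signs of $v^{\rm sort}_j$ and $v^{\rm sort}_{j+1}$: the stable tie-break on non-negative ties prevents $\pi(j)>\pi(j+1)$ when the two entries are equal, while the reversed tie-break on negative ties plays the analogous role (since $-a>-b$ iff $a<b$); the mixed-sign cases follow immediately from the structure of $\prec$, where a transition from non-negative to negative always carries a strict increase in absolute value. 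For the boundary, $0\in\Des_B(\pi)$ is equivalent to $v^{\rm sort}_1<0$, which forces $u_1>0$. Once the bijection is established, the number of admissible $\vec{u}$'s for a fixed $\pi$ is the standard $\binom{m+n-\des_B(\pi)}{n}$ (obtained by the usual substitution $u'_i=u_i-|\{j\in\Des_B(\pi):j<i\}|$), and summing yields
\[
(1+(1+q)m)^n = \sum_{\pi\in B_n} q^{{\rm neg}(\pi)}\binom{m+n-\des_B(\pi)}{n} = \sum_{k=0}^n\binom{m+n-k}{n}B_{n,k}(q).
\]
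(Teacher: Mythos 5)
Your proposal is correct and follows essentially the same route as the paper: the same sorting-with-sign construction (stable tie-break on non-negative ties, reversed on negative ties, zeros treated as positive), the same characterization of the fiber over a fixed $\sigma\in B_n$ by weak/strict inequalities governed by $\Des_B$, and the same stars-and-bars count $\binom{n+m-\des_B(\sigma)}{n}$. No substantive differences to report.
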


The proof of this identity in \cite{Brenti} uses manipulations of generating functions (see the proof of Theorem 3.4(ii)). Here, we present a combinatorial proof based on a direct counting argument.

\medskip

We start by presenting an algorithm which associates with each vector a signed permutation.

\begin{algo}\label{B_n algorithm}
Let $\vec{v}=(a_1,a_2,\dots,a_n)\in (\{{\bf 0},\pm1, \dots, \pm m\})^n $. First write the entries of $\vec{v}$ in a row according to the defined order. This yields a permutation $\pi=[\pi_1,\dots,\pi_n] \in S_n$ satisfying $a_{\pi_1}\leq a_{\pi_2} \leq \cdots \leq a_{\pi_n}$.
Moreover, if we have $a_{\pi_i}=a_{\pi_{i+1}}$, we construct the permutation $\pi$ so the following condition holds:
\begin{equation}\label{des condition}
\mbox{If } a_{\pi_i} \geq {\bf 0} \mbox{, then } \pi_i<\pi_{i+1} \mbox{, and } \pi_i>\pi_{i+1} \mbox{ otherwise.}
\end{equation}
This means that $\pi$ reads equal entries of $\vec{v}$ from left to right if they are nonnegative, and from right to left otherwise.

\medskip

For example, let $m=3$, $n=6$, and $\vec{v}=(1,-2,{\bf 0},-1,3,-2)$.  In the defined order we have: $a_3\prec a_4\prec a_1\prec a_2 = a_6 \prec a_5$, and hence (since $a_2=a_6<0$) we have $\pi=[3,4,1,6,2,5]$.

\medskip

Finally, define $\sigma \in B_n$ in the following way: For each $i$ satisfying $a_{\pi_i} \geq {\bf 0}$, define $\sigma_i=\pi_i$ and for each $i$ satisfying $a_{\pi_i}<{\bf 0}$, define $\sigma_i=-\pi_i$.  In the example above, we have $\sigma=[3,-4,1,-6,-2,5]$.

Note that $\pi_i=|\sigma_i|$. \end{algo}

Furthermore, if $|a_{\pi_j}| = |a_{\pi_{j+1}}|$, we have $\sigma_j < \sigma_{j+1}$. Indeed:
\begin{itemize}
\item If $a_{\pi_j} = a_{\pi_{j+1}}>0$, by Condition (\ref{des condition}) above, we have $\pi_j < \pi_{j+1}$.
\item If $a_{\pi_j} = a_{\pi_{j+1}}<0$, by Condition (\ref{des condition}) we have $\pi_j > \pi_{j+1}$, and thus $\sigma_j=-\pi_j < -\pi_{j+1}=\sigma_{j+1}$.
\item If $a_{\pi_j} =-a_{\pi_{j+1}}$, according to the defined order, we must have $a_{\pi_j}<0$, $a_{\pi_{j+1}}>0$, and hence $\sigma_j <0<\sigma_{j+1}$.
\end{itemize}
Therefore, by the definition of the descents of type $B$ and by the construction of $\pi$, we conclude the following:
\begin{equation}\label{des2 condition}
\mbox{If } j \in {\rm Des}_B(\sigma) \mbox{, then } |a_{\pi_j}| < |a_{\pi_{j+1}}| \end{equation}
(while assuming $\pi_0=0$, $a_{\pi_0}=a_0=0$ and recall that $\pi_i=|\sigma_i|$).

\begin{proof}[Proof of Theorem \ref{worp type B}]
The left hand side counts the number of vectors of the form $$\vec{v}=(a_1,a_2,\dots,a_n) \in (\{0,\pm1, \dots, \pm m\})^n,$$
where each vector $\vec{v}$ contributes $q^{{\rm neg}(\vec{v})}$.

As we show below, the right hand side counts the same set of vectors, where they are classified by signed permutations.

\medskip

Denote by $\phi_{n,m}$ the mapping $\vec{v} \mapsto \sigma$ defined in Algorithm \ref{B_n algorithm} and note that ${\rm neg}(\vec{v})={\rm neg}(\phi_{n,m}(\vec{v}))$.

We show that the number of vectors associated by the algorithm to a given permutation $\sigma\in B_n$ is exactly ${{n+m-\des_B (\sigma)}\choose{n}}$, i.e.,
$$\left| \phi_{n,m}^{-1}(\sigma) \right| = {{n+m-\des_B(\sigma)}\choose{n}},$$
from which the theorem immediately follows.

\medskip

We start with an example: let $\sigma=[2,-1,4,-5,3]\in B_5$ and $m=3$.  Note that
${\rm Des}_B(\sigma)=\{1,3\}$.

We have to find the vectors $$\vec{v}=(a_1,a_2,a_3,a_4,a_5) \in (\{0, \pm 1 , \pm 2 , \pm 3 \})^5$$
satisfying $a_1<0, a_5<0$ and $a_2 \geq 0, a_3>0, a_4>0$ (in the usual integer order) and
\begin{equation}\label {abs values}
0 \leq |a_2| < |a_1| \leq |a_4| < |a_5| \leq |a_3| \leq 3,
\end{equation}
and we have to show that there are ${{5+3-2}\choose{5}} =6$ such vectors.

The sequence of inequalities (\ref{abs values}) is equivalent in turn to
\begin{equation}
1 \leq \underbrace{|a_2|+1}_{=b_1} < \underbrace{|a_1|+1}_{=b_2} < \underbrace{|a_4|+2}_{=b_3} < \underbrace{|a_5|+2}_{=b_4} < \underbrace{|a_3|+3}_{=b_5} \leq 6,
\end{equation}
so we can conclude that the number of vectors satisfying the sequence of inequalities (\ref{abs values}) is ${6 \choose 5}=6$
as claimed.

\medskip

Here is the argument in general:
Let $\sigma=[\sigma_1,\dots,\sigma_n] \in B_n$.  We have to find the number
of vectors
$$\vec{v}=(a_1,a_2,\dots,a_n)\in (\{0,\pm1,\dots,\pm m\})^n$$
such that for each $j \in \{1,\dots,n\}$ satisfying $\sigma_j<0$, one has $a_{|\sigma_j|}<0$ and:
$$0 \leq |a_{|\sigma_1|}|\leq |a_{|\sigma_2|}| \leq \cdots \leq |a_{|\sigma_n|}|\leq m,$$
with the property that the $i^{\rm th}$ order sign in this sequence of inequalities is strict if $i \in {\rm Des}_B(\sigma)$, for $0 \leq i \leq n-1$.

By adding $1$ to each term in this sequence of inequalities, we obtain:
$$1 \leq |a_{|\sigma_1|}|+1\leq |a_{|\sigma_2|}|+1 \leq \cdots \leq |a_{|\sigma_n|}|+1\leq m+1.$$
Now, in order to convert to strict order signs, we add $1$ to the right hand side of each non-strict inequality (and to each inequality to the right of it).
Since the number of strict order signs in the original sequence of inequalities is ${\rm des}_B(\sigma)$, at the end of this process, we have:
$$1 \leq b_1 < b_2< \cdots< b_n \leq m+n-{\rm des}_B(\sigma),$$
where $b_i=\left| a_{|\sigma_i|} \right|+\left|\left\{j \in {\rm Des}_B(\sigma) \mid j<i \right\}\right| +1$.

The number of integer solutions of this sequence of inequalities is:\break $\tiny{\left(\hspace{-4pt} \begin{array}{c} m+n-{\rm des}_B(\sigma) \\ n \end{array} \hspace{-4pt} \right)}$.
Note that for each $i$, after fixing the value of $b_i$, the value of $a_{|\sigma_i|}$ is uniquely determined.
\end{proof}

\section{Worpitzky identity for  Coxeter groups of type D} \label{section type D}

The following generalization of Worpitzky identity for type $D$ is due to Brenti \cite[Coro. 4.11]{Brenti}:
\begin{prop}\label{worp type D}
For $n \geq 2$, we have:
\begin{equation}\label{equation for type D}
(1+2x)^n-2^{n-1}(\mathcalboondox{B}_n(x+1)-\mathcalboondox{B}(n)))=\sum\limits_{k=0}^n{{n+x-k}\choose {n}}D_{n,k},
\end{equation}
where $\mathcalboondox{B}_n(\cdot)$ is the $n^{\rm th}$ Bernoulli polynomial and $\mathcalboondox{B}_n$ is the $n^{\rm th}$ Bernoulli number.
\end{prop}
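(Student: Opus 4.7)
My plan is to mirror the type $B$ proof (Theorem 3.2) as closely as possible, but with $D_n$-permutations replacing $B_n$-permutations and with an algebraic correction that absorbs the inevitable discrepancy. The left-hand side $(1+2x)^n=(2m+1)^n$ (setting $x=m$) again counts vectors $\vec v\in \Sigma^n$ with $|\Sigma|=2m+1$, so the task is to partition these vectors among $\sigma\in D_n$ in such a way that each $\sigma$ collects $\binom{n+m-\des_D(\sigma)}{n}$ of them, up to a controlled leftover enumerated by $2^{n-1}(\mathcalboondox{B}_n(m+1)-\mathcalboondox{B}(n))$.

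To set up the assignment I would adapt Algorithm 3.2 so that its output lives in $D_n$. Given $\vec v$, sort its entries in the defined order to obtain a $B_n$-permutation $\sigma$ exactly as before; if $\sigma\in D_n$ we keep it, otherwise we flip the sign of $\sigma_1$ to obtain a $D_n$ representative. The preimage count then nearly reproduces the type $B$ argument: for each $\sigma\in D_n$, the vectors that sort to $\sigma$ correspond to strict chains $1\leq b_1<\cdots<b_n\leq m+n-\des_B(\sigma)$, contributing $\binom{n+m-\des_B(\sigma)}{n}$. The gap with the desired right-hand side is
\[
\sum_{\sigma\in D_n}\!\left[\binom{n+m-\des_B(\sigma)}{n}-\binom{n+m-\des_D(\sigma)}{n}\right]+\!\!\sum_{\sigma\in B_n\setminus D_n}\!\binom{n+m-\des_B(\sigma)}{n},
\]
which I would aim to identify with $2^{n-1}(\mathcalboondox{B}_n(m+1)-\mathcalboondox{B}(n))$.

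The comparison of $\des_B$ and $\des_D$ on $D_n$ is governed entirely by the first two letters: the two statistics agree unless $\sigma_1$ and $\sigma_1+\sigma_2$ have opposite signs. I would therefore partition $D_n$ according to the sign behaviour at positions $1$ and $2$, and pair each ``bad'' $\sigma\in D_n$ with the unique $\sigma'\in B_n\setminus D_n$ obtained by flipping $\sigma_1$. Within each such pair the bulk of the binomial contributions cancels telescopically, and the surviving residue can be parametrised by the common absolute value of the first entry together with free signed data in the remaining $n-1$ coordinates.

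The main obstacle, in line with the authors' warning about ``manipulatorics'', is recognising this residue as the Bernoulli term. Via Faulhaber's formula
\[
\mathcalboondox{B}_n(m+1)-\mathcalboondox{B}(n)=n\sum_{k=1}^{m} k^{n-1},
\]
the target becomes $n\cdot 2^{n-1}\sum_{k=1}^{m}k^{n-1}$: the factor $n$ should reflect the choice of a distinguished ``pivot'' position, $2^{n-1}$ the free sign choices on the remaining entries, and $\sum k^{n-1}$ the absolute value of the pivot as it ranges up to $m$. I expect the generic part of the argument to parallel the type $B$ calculation almost verbatim, while rewriting the leftover difference of binomials into the exact shape $n\cdot 2^{n-1}k^{n-1}$ will be the delicate step and will require some careful algebraic rearrangement alongside the combinatorial bijection.
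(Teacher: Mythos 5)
Your setup is sound and your reduction is arithmetically correct: subtracting the target identity from the type-$B$ Worpitzky identity shows that the theorem is \emph{equivalent} to the claim that your ``gap'' expression equals $2^{n-1}\bigl(\mathcalboondox{B}_n(m+1)-\mathcalboondox{B}(n)\bigr)=n2^{n-1}\sum_{k=1}^m k^{n-1}$. But this means your reduction makes no progress on the type-$D$-specific difficulty --- it merely restates the theorem modulo the type-$B$ result --- and the step where all the content lives is exactly the one you defer. The proposed mechanism (pair each ``bad'' $\sigma\in D_n$ with the element of $B_n\setminus D_n$ obtained by flipping $\sigma_1$, and let the binomial contributions ``cancel telescopically'') is not substantiated and is doubtful as stated: flipping the sign of $\sigma_1$ can change $\mathrm{Des}_A$ at position $1$ as well as the $0$-descent, so $\des_B(\sigma)$ and $\des_B(\sigma')$ are related only through a case analysis; moreover the ``good'' elements of $D_n$ are also matched under this involution to elements of $B_n\setminus D_n$ whose contributions do not vanish, so the leftover is not confined to the bad pairs. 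No computation is offered showing that the surviving residue takes the form $n2^{n-1}k^{n-1}$; the heuristic reading of the three factors ($n$ for a pivot position, $2^{n-1}$ for signs, $k^{n-1}$ for values) is plausible but unproved. As it stands this is a genuine gap, not a routine verification.

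For comparison, the paper avoids your discrepancy sum entirely by building the correction into the vector-to-permutation map rather than into the descent statistics. Its Algorithm 4.3 does \emph{not} assign every vector to a $D_n$-permutation: the sign of $\sigma_1$ is flipped only when $a_{\sigma_1}=\mathbf{0}$ (so that $\mathbf{0}$ can consistently be reinterpreted as a negative value), and vectors for which this repair is impossible, or for which the repaired permutation would violate the condition ``$j\in\mathrm{Des}_D(\sigma)\Rightarrow|a_{\sigma_j}|<|a_{\sigma_{j+1}}|$'', are left unassigned. This makes Lemma 4.4 ($|\psi_{n,m}^{-1}(\sigma)|=\binom{n+m-\des_D(\sigma)}{n}$, with $\des_D$ and not $\des_B$) go through verbatim as in type $B$, and concentrates all the work in Lemma 4.5: a direct enumeration of the unassigned vectors, split into three cases according to the presence of $\mathbf{0}$, the parity of the number of negative signs, and the position of $\mathbf{0}$ relative to the second-smallest entry, each counted by a triple sum that collapses (after a page of algebra plus a telescoping cancellation) to $2^{n-1}n\sum_{j=0}^{m-1}(j+1)^{n-1}$. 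If you want to complete your route, you would need to either reproduce an equivalent of that enumeration on the permutation side, or prove the binomial-sum identity for the descent generating functions of $B_n$ and $D_n$ by some independent means; neither is done in your proposal.
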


By \cite[Equation (5.12)]{Mezo}, Equation (\ref{equation for type D}) above can also be written as follows:
$$(1+2m)^n-2^{n-1}(n(1^{n-1}+\cdots+m^{n-1}))=\sum\limits_{k=0}^n{{n+m-k}\choose {n}}D_{n,k}.$$

Brenti \cite{Brenti} also alludes to the following $q$-analogue:

\begin{thm}\label{worp type D with q}
For $n \geq 2$, we have:
{\tiny\begin{equation}\label{equation for type D with q}
 (1+2m)((1+q)m)^{n-1}-(1+q)^{n-1}(n(1^{n-1}+\cdots+m^{n-1}))=\sum\limits_{k=0}^n{{n+m-k}\choose {n}}D_{n,k}(q).
\end{equation}}
\end{thm}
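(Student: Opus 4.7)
The plan is to generalize the bijective proof of Theorem~\ref{worp type B} to $D_n$, accounting for the asymmetry between ${\rm Des}_B$ and ${\rm Des}_D$ at position $0$ by a correction term that enumerates ``missing vectors''.

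The first step is to interpret the first summand $(1+2m)((1+q)m)^{n-1}$ as the $q$-weighted count of a family of vectors over $\Sigma$: those whose first coordinate ranges freely over $\Sigma$ and whose remaining coordinates range over $\Sigma\setminus\{\mathbf{0}\}$, weighted by $q$ to the number of negatives appearing among the last $n-1$ coordinates. I would then apply a sign-adjusted variant of Algorithm~\ref{B_n algorithm} (sort in the defined order, sign-assign from the entries), with a parity adjustment forcing the output to land in $D_n$.

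Next I would replay the inequality-rewriting argument from the proof of Theorem~\ref{worp type B}. For $\sigma\in D_n$, insisting that the sequence $|a_{|\sigma_1|}|\leq\cdots\leq|a_{|\sigma_n|}|$ be strict exactly at positions in ${\rm Des}_B(\sigma)$ produces a fiber count of $\binom{n+m-{\rm des}_B(\sigma)}{n}$. Converting this to the target $\binom{n+m-{\rm des}_D(\sigma)}{n}$ requires tracking the discrepancy ${\rm des}_D(\sigma)-{\rm des}_B(\sigma) \in \{-1,0,+1\}$, which is nonzero exactly when $\sigma(1)$ and $\sigma(2)$ have opposite signs and the comparison between $|\sigma(1)|$ and $|\sigma(2)|$ flips the verdict of the position-$0$ descent.

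The third step is to enumerate the ``missing vectors'' arising from this discrepancy. I expect a direct bijection parametrizing them by a triple consisting of a marked position in $[n]$, an absolute value $j\in[m]$, and a $q$-weighted sign/value choice on the remaining $n-1$ positions, contributing a total weighted enumeration of $(1+q)^{n-1}\cdot n\cdot\sum_{j=1}^m j^{n-1}$. The Faulhaber--Bernoulli identity $\mathcalboondox{B}_n(m+1)-\mathcalboondox{B}(n)=n\sum_{j=1}^m j^{n-1}$ then identifies this with the subtracted Bernoulli term, so subtracting it from the first-term vector count yields $\sum_k\binom{n+m-k}{n}D_{n,k}(q)$.

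The main obstacle will be isolating the precise notion of ``missing vector'' and verifying that its weighted count has the marked--signed--Faulhaber shape $n\cdot j^{n-1}\cdot(1+q)^{n-1}$ needed to match the Bernoulli expression. The case analysis is forced by the interaction among the parity constraint of $D_n$, the asymmetric test $\sigma(1)+\sigma(2)<0$ for the type $D$ descent at $0$, and the tie-breaking convention already built into Algorithm~\ref{B_n algorithm}; this is the ``manipulatorics'' referred to in the introduction, where a looser accounting trades a clean bijection for the clean closed-form Bernoulli correction.
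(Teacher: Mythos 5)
Your overall architecture---fibers of size $\binom{n+m-\des_D(\sigma)}{n}$ plus a count of ``missing vectors'' identified with the Faulhaber form of the Bernoulli term---is the same as the paper's (Lemmas \ref{pre image for D} and \ref{missing vectors with q}). But there are two genuine gaps. First, your combinatorial model of the first summand (first coordinate free over $\Sigma$, the remaining $n-1$ coordinates nonzero, one factor of $q$ per negative among them) is not the set of vectors on which the algorithm acts. The paper runs Algorithm \ref{D_n algorithm} on all of $\Sigma^n$---zeros may occur in any position and with any multiplicity (e.g.\ $(\mathbf{0},\mathbf{0},-1)\mapsto[-1,2,-3]$ in the paper's example)---weighted by $q^{{\rm neg}_2(\vec{v})}$, where the $q$-exemption applies to the element of $\vec{v}$ that is smallest in the defined order, not to a fixed coordinate. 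On your restricted set the fibers do not have size $\binom{n+m-\des_D(\sigma)}{n}$, since the vectors in which $\mathbf{0}$ occurs outside the first position (which Lemma \ref{pre image for D} needs, with $\mathbf{0}$ allowed to act as a negative value exactly when $0\notin\Des_D(\sigma)$) are absent. Relatedly, your plan to first obtain $\binom{n+m-\des_B(\sigma)}{n}$ and then correct by the discrepancy $\des_D(\sigma)-\des_B(\sigma)$ is never carried out; the paper instead builds Condition (\ref{desD condition}) directly into the algorithm and declares a vector ``missing'' precisely when no compatible $D_n$-permutation exists, so no descent-set bookkeeping between types $B$ and $D$ is ever needed.

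Second, the count of the missing vectors is the substance of the proof, and you only assert its answer. There is no single ``marked position, value $j$, sign choice'' bijection yielding $n\cdot j^{n-1}\cdot(1+q)^{n-1}$ directly: the missing vectors split into three families (no zero and an odd number of negatives; a zero with an odd number of negatives such that $0\in\Des_D(\sigma')$ after the sign flip; a zero with an even number of negatives and $0\in\Des_D(\sigma)$), each enumerated by a triple sum over the value of the second-smallest element, the position of $\mathbf{0}$, and the multiplicity of that value on one side of $\mathbf{0}$. The closed form $(1+q)^{n-1}n\sum_{j=0}^{m-1}(j+1)^{n-1}$ emerges only after binomial-theorem collapses, the identity $\frac{a^n-b^n}{a-b}=\sum_{i=1}^{n}a^{n-i}b^{i-1}$, a telescoping sum, and a cancellation of the Case (1) contribution $(1+q)^{n-1}m^n$ against part of the other cases. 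Until that computation (or a genuinely bijective substitute with the claimed product structure) is supplied, the argument is incomplete.
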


Before proving Theorems \ref{worp type D} and \ref{worp type D with q} combinatorially, we  describe an algorithm which for a given vector $\vec{v}\in (\{0,\pm1, \dots, \pm m\})^n$ either associates with it a $D_n$-permutation or decides not to associate it with any $D_n$-permutation. When a $D_n$-permutation $\sigma$ is associated to $\vec{v}$, it should satisfy the following condition, similar to the corresponding Condition (\ref{des2 condition}) above for $B_n$:
\begin{equation}\label{desD condition}
\mbox{If } j \in {\rm Des}_D(\sigma) \mbox{, then } |a_{\sigma_j}| < |a_{\sigma_{j+1}}|
\end{equation}
(while assuming again $\sigma_0=0$ and $a_{\sigma_0}=a_0=0$).

\begin{algo}\label{D_n algorithm}

Let $\vec{v}=(a_1,\dots,a_n) \in ([-m,m])^n$ and let $\sigma \in B_n$ be the permutation associated to $\vec{a}$ by Algorithm \ref{B_n algorithm} above.

We distinguish between two cases, depending on whether or not the value $\bf{0}$ appears in $\vec{v}$.

\medskip

\noindent
{\bf First case:} The number $\bf{0}$ appears in $\vec{v}$. Let $i$ be the smallest index satisfying $a_i=\bf{0}$, and therefore $\sigma_1=i$.  We consider two sub-cases:
\begin{itemize}
\item[(a)] If the number of negative signs in $\vec{v}$ is even, then $\sigma \in D_n$.

\begin{itemize}
\item[$\bullet$] If $0 \notin {\rm Des}_D(\sigma)$, i.e. $\sigma_1+\sigma_2>0$, then we associate $\sigma$ to $\vec{v}$.
\item[$\bullet$] Otherwise, if $0 \in {\rm Des}_D(\sigma)$, then we do not associate any $D_n$-permutation to $\vec{v}$, since by Condition (\ref{desD condition}), we have $a_{\sigma_1} > a_{\sigma_0}=\bf{0}$, and so $\bf{0}$ cannot appear in $\vec{v}$ .
\end{itemize}

\medskip

\item[(b)] If the number of negative signs in $\vec{v}$ is odd (and so $\sigma \notin D_n$), then we modify $\sigma$ by inverting the sign of $\sigma_1$ (and thus considering the first appearance of $\bf{0}$ in $\vec{v}$ as negative) and denote the resulting $D_n$-permutation by $\sigma'$.

\begin{itemize}
\item[$\bullet$]
If $0 \notin {\rm Des}_D(\sigma')$, then associate $\sigma'$ to $\vec{v}$.

\item[$\bullet$] Otherwise, if $0 \in \Des_D(\sigma')$, then we do not associate any $D_n$-permutation to $\vec{v}$, again, in order to prevent a contradiction with Condition (\ref{desD condition}).
\end{itemize}

\medskip

\begin{example}
Given $n=3,m=2$ and $\vec{v}=(-2, 0, 0)$. Then  $\sigma = [2 ,3 ,-1] \notin D_3$, and so we invert the sign of $\sigma_1$ to obtain $\sigma' =[-2,3,-1] \in D_3$. The $D_3$-permutation $\sigma'$ will be associated with $\vec{v}$, since $0\notin {\rm Des}_D(\sigma')$.

On the other hand, if we take $\vec{v}=(2,0,-1)$, then we get $\sigma=[2,-3,1]\notin D_3$, so we must set $\sigma'=[-2,-3,1]\in D_3$. Since $0\in {\rm Des}_{D}(\sigma')$, we refrain from associating $\vec{v}$ with a $D_3$-permutation.
\end{example}

\end{itemize}

\medskip

\noindent
{\bf Second case:} The value $\bf{0}$ does not appear in $\vec{v}$. In this case:

\begin{itemize}
\item[$\bullet$]
If the number of negative signs in $\vec{v}$ is even, then we associate $\sigma \in D_n$ to $\vec{v}$;

\item[$\bullet$]
Otherwise, we do not associate any $D_n$-permutation to $\vec{v}$, since the obtained permutation $\sigma$ is not in $D_n$.
\end{itemize}

\end{algo}

\medskip

We summarize Algorithm \ref{D_n algorithm} in the following flowchart:

\medskip

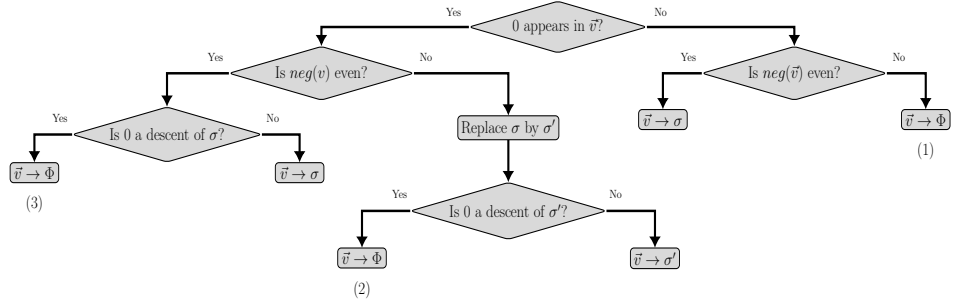
\begin{figure}[!ht]
\begin{center}
\resizebox{12.5cm}{4cm}
{\begin{tikzpicture}[node distance = 7mm and -3mm,
every path/.style = {draw, -{Latex[length=4mm, width=5mm]}}]
\node (a) [font=\Large,draw=black, diamond, rounded corners, aspect=4, rounded corners, fill=gray!30, minimum width=2cm,minimum height=0.5cm, align=center]{$0$ appears in $\vec{v}$?};

\node (left-a)[above left=-2mm and 20mm of a]{Yes};
\node (right-a)[above right=-2mm and 20mm of a]{No};

\node (b) [font=\Large,draw=black, diamond, rounded corners, aspect=4, fill=gray!30, minimum width=1.5cm,minimum height=0.4cm, align=center,below left=7mm and 60mm of a] {Is $neg(v)$ even?};

\node (left-b)[above left=-2mm and 20mm of b]{Yes};
\node (right-b)[above right=-2mm and 20mm of b]{No};

\node (c) [font=\Large,draw=black, diamond, rounded corners, aspect=4, rounded corners, fill=gray!30, minimum width=2cm,minimum height=0.5cm, align=center,below left=12mm and 25mm of b] {Is $0$ a descent of $\sigma$?};

\node (left-c)[above left=-2mm and 20mm of c]{Yes};
\node (right-b)[above right=-2mm and 20mm of c]{No};

\node (d) [font=\Large, draw=black, rounded corners, fill=gray!30, minimum width=2cm,minimum height=0.5cm, align=center,below left=5mm and 25mm of c]{$\vec{v} \rightarrow  \Phi$};

\node (left-d)[ below=3mm of d]{\Large (3)};

\node (e) [font=\Large,draw=black, rounded corners, fill=gray!30, minimum width=2cm,minimum height=0.5cm, align=center,below right=5mm and 25mm of c]{$\vec{v} \rightarrow \sigma$};

\node (f) [font=\Large,draw=black, rounded corners, fill=gray!30, minimum width=2cm,minimum height=0.5cm, align=center,below right=15mm and 18mm of b.east]{Replace $\sigma$ by $\sigma'$};

\node (g) [font=\Large,draw=black, diamond, rounded corners, aspect=4, fill=gray!30, minimum width=2cm,minimum height=0.5cm, align=center,below=15mm of f]{Is $0$ a descent of $\sigma'$?};

\node (left-g)[above left=-2mm and 20mm of g]{Yes};
\node (right-g)[above right=-2mm and 20mm of g]{No};

\node (i) [font=\Large,draw=black, rounded corners, fill=gray!30, minimum width=2cm,minimum height=0.5cm, align=center,below left=8mm and 30mm of g]{$\vec{v} \rightarrow \Phi$};

\node (left-i)[ below=3mm of i]{\Large (2)};

\node (j) [font=\Large,draw=black, diamond, rounded corners, aspect=4, rounded corners, fill=gray!30, minimum width=2cm,minimum height=0.5cm, align=center,below right=7mm and 60mm of a]{Is $neg(\vec{v})$ even?};

\node (left-j)[above left=-2mm and 20mm of j]{Yes};
\node (right-j)[above right=-2mm and 20mm of j]{No};

\node (h) [font=\Large,draw=black, rounded corners, fill=gray!30, minimum width=2cm,minimum height=0.5cm, align=center,below right=8mm and 25mm of j]{$\vec{v} \rightarrow \Phi$};

\node (left-h)[ below=3mm of h]{\Large (1)};

\node (h2) [font=\Large,draw=black, rounded corners, fill=gray!30, minimum width=2cm,minimum height=0.5cm, align=center,below left=8mm and 25mm of j]{$\vec{v} \rightarrow \sigma$};
\node (h1) [font=\Large,draw=black, rounded corners, fill=gray!30, minimum width=2cm,minimum height=0.5cm, align=center,below right=8mm and 30mm of g]{$\vec{v} \rightarrow \sigma'$};

\draw[line width=2.5pt]   (a) -| (b);
\draw[line width=2.5pt]   (b) -| (c);
\draw[line width=2.5pt]   (c) -| (d);
\draw[line width=2.5pt]   (c) -| (e);
\draw[line width=2.5pt]   (b) -| (f);
\draw[line width=2.5pt]   (a) -| (j);
\draw[line width=2.5pt]   (j) -| (h);
\draw[line width=2.5pt]   (j) -| (h2);
\draw[line width=2.5pt]   (f) -- (g);
\draw[line width=2.5pt]   (g) -| (h1);
\draw[line width=2.5pt]   (g) -| (i);
    \end{tikzpicture}}
\end{center}
\caption{Flowchart of Algorithm \ref{D_n algorithm}}\label{flowchart}
\end{figure}

\medskip

Denote by $\psi_{n,m}$ the (partial) mapping that associates a vector $\vec{v}$ with its permutation $\sigma \in D_n$ according to Algorithm \ref{D_n algorithm}.

We point out that any vector $\vec{v}$ not associated to a $D_n$-permutation by this algorithm must contain at most one zero, since if it contains more than one zero, then Algorithm \ref{B_n algorithm} (for $B_n$) yields a permutation $\sigma$ with $\sigma_2 >\sigma_1 >0$ (since $\bf{0}$'s are read from left to right in that algorithm).  Hence even if the sign of $\sigma_1$ is changed, we still have $0 \notin {\rm Des}_D(\sigma)$ and so $\vec{v}$ will be associated to some permutation ($\sigma$ or $\sigma'$) in $D_n$.

\medskip

The proof of Theorem \ref{worp type D} consists of the following two lemmas:
\blem\label{pre image for D}
For each $\sigma \in D_n$, we have $|\psi_{n,m}^{-1}(\sigma)|={ {n+m-{\rm des}_D(\sigma)}\choose{n}}$.
\elem

\blem \label{missing vectors}
The number of vectors not associated to any $D_n$-permutation by Algorithm \ref{D_n algorithm} (called \emph{'missing' vectors}) is $$2^{n-1}n\sum\limits_{j=0}^{m-1} (j+1)^{n-1}.$$
 \elem

These two lemmas together clearly prove Theorem \ref{worp type D}.
\begin{proof}[Proof of Lemma \ref{pre image for D}]
As in the proof of Theorem $\ref{worp type B}$ for type $B$, we need to show that the number of vectors associated by $\psi_{n,m}$ to each $D_n$-permutation $\sigma$ is equal to $\tiny{\left(\hspace{-4pt}\begin{array}{c}n+m-\des_D(\sigma) \\ n \end{array}\hspace{-4pt}\right)}$.
The proof is identical to the parallel
proof for type $B$, using the principle in Equation (\ref{desD condition}) above, so that if $0\in {\rm Des}_D(\sigma)$ then $|a_{\sigma_1}|> 0$ (recall that  $0 \in {\rm Des}_D(\sigma)$ if
$\sigma_1 +\sigma_2<0$).
Note that $\bf{0}$ can be considered as a negative value for the associated $D_n$-permutation, provided that $0 \notin {\rm Des}_D(\sigma)$, as in sub-case (b) of the first case in the algorithm.
\end{proof}

\begin{example} \ \\
(a) Let $\sigma=[2,-3,1,4,-5]\in D_5$ and assume that $m=4$.  Note that ${\rm Des}_D(\sigma)=\{0,1,4\}$, hence, by Condition  (\ref{desD condition}) we have\break $|a_{\sigma_1}| > 0$, so that the value ${\bf 0}$ does not appear in any vector associated to $\sigma$.

We have to find the set of vectors
$$\vec{v}=(a_1,a_2,a_3,a_4,a_5) \in (\{{\bf 0}, \pm 1 , \pm 2 ,\pm3, \pm4 \})^5,$$
satisfying $a_3<0, a_5<0$ and $a_1 \geq 0,a_2 \geq 0,a_4 \geq 0$ (in the usual integer order) and \begin{equation}\label {abs values D}
0 < |a_2| < |a_3| \leq |a_1| \leq |a_4| < |a_5| \leq 4,   \end{equation}
and one should find ${{5+4-3}\choose{5}} =6$ such vectors.

The sequence of inequalities (\ref{abs values D}) is equivalent in turn to
\begin{equation}
1 \leq \underbrace{|a_2|}_{=b_1} < \underbrace{|a_3|}_{=b_2} < \underbrace{|a_1|+1}_{=b_3} < \underbrace{|a_4|+2}_{=b_4} < \underbrace{|a_5|+2}_{=b_5} \leq 6,
\end{equation}
so we can conclude that the number of vectors satisfying the sequence of inequalities (\ref{abs values D}) is ${6 \choose 5}=6$
as needed. One can check that these vectors are: $(2,1,-2,2,-3)$, $(2,1,-2,2,-4)$, $(2,1,-2,3,-4)$,
$(3,1,-2,3,-4)$, $(3,1,-3,3,-4)$ and $(3,2,-3,3,-4)$.

\medskip

\noindent
(b) We present also an example in which $0\notin {\rm Des}_D(\sigma)$, so that $\sigma$ may be associated with vectors which contain the value ${\bf 0}$.
Let $m=2$ and let $\sigma=[-1,2,-3]\in D_3$. The requirements here are that $a_1\leq 0$ and $a_3\leq 0$ and also that
$0 \leq |a_1| \leq  |a_2| < |a_3| \leq 2$.

\noindent The vectors associated with $[-1,2,-3]$ are therefore $({\bf 0},{\bf 0},-1),({\bf 0},{\bf 0},-2)$, $({\bf 0},1,-2)$ and $(-1,1,-2)$. Note that the initial value ${\bf 0}$ in the first  three vectors are considered as negative (these vectors undergo the modification from $[1,2,-3]$ to $[-1,2,-3]$ in the first case of Algorithm \ref{D_n algorithm}).
\end{example}

\begin{proof}[Proof of Lemma \ref{missing vectors}]
There are three types of 'missing' vectors:
\begin{enumerate}
    \item {\bf Vectors which do not contain 0 and having an odd number of negative signs:} In this case, no correction of the number of signs is possible due to the lack of $\bf{0}$ (the presence of which could be used to add one to the number of negative signs), so this type of vectors is missing (Leaf (1) in the flowchart appearing in Figure \ref{flowchart}).

\medskip

\item Vectors which contain $\bf{0}$ (i.e. $a_{\sigma_1}=\bf{0}$) and have {\bf an odd number of negative signs}, such that after the modification (of $\sigma$ to $\sigma'$) we get $0\in {\rm Des}_D(\sigma')$ (Leaf (2) in the flowchart appearing in Figure \ref{flowchart}):
In this case, we must have (after the modification) $\sigma'_1<0$, and the condition $\sigma'_1+\sigma'_2<0$ implies one of the following two possibilities:

\medskip

\begin{enumerate}
        \item $|\sigma_1|>|\sigma_2|$, i.e., {\bf $\bf{0}$ is to the right of the element of $\vec{v}$ which follows $\bf{0}$ in the defined order.} In this sub-case, the sign of $\sigma_2$ is {\bf arbitrary. }

\medskip

        \item $|\sigma_1|<|\sigma_2|$, but $\sigma_2<0$.  In this sub-case, {\bf the element of $\vec{v}$ following $\bf{0}$ in the defined order must be negative and must be located to the right of $\bf{0}$ in $\vec{v}$.}
    \end{enumerate}

\medskip

\item Vectors which contain $\bf{0}$ (again, recall that this means $a_{\sigma_1}=\bf{0}$),  have {\bf an even number of negative signs}, and their associated $D_n$-permutation $\sigma$ satisfies $0 \in {\rm Des}_D(\sigma)$ (Leaf (3) in the flowchart appearing in Figure \ref{flowchart}):
In this case, we have $\sigma_1>0$, and since the vector has an even number of negative signs, the sign of $\sigma_1$ has not been changed by the algorithm. 
Combining with the fact that $0 \in {\rm Des}_D(\sigma)$, we conclude that $\sigma_2<0$ and $|\sigma_2|>\sigma_1$. These two requirements mean that {\bf the element of $\vec{v}$ following $\bf{0}$ in the defined order must be negative and must be located to the right of $\bf{0}$.}
\end{enumerate}

Before counting the 'missing' vectors, we show that none of them is already counted in the pre-image of $\psi_{n,m}$ in Lemma \ref{pre image for D} for any $D_n$-permutation.
Indeed, if $\sigma \in D_n$ satisfies that $\vec{v}\in \psi_{n,m}^{-1}(\sigma)$ and the value $\bf{0}$ does not appear in $\vec{v}$, then the number of negative elements in $\sigma$ equals the number of negative elements in $\vec{v}$. On the other hand, if the value $\bf{0}$ does appear in $\vec{v}$, but $0 \in {\rm Des}_D(\pi)$, then by Condition (\ref{desD condition}) $|a_{\sigma_1}|>0$ which contradicts the fact that $\bf{0}$ appears in $\vec{v}$.

\medskip

We proceed by counting the above 'missing' vectors.  We will have occasion throughout to refer to the absolute value of the element of $\vec{v}$ which is smallest in the defined order after $\bf{0}$, which we will call the ``second-smallest element" of $\vec{v}$.

\medskip

We start by counting the vectors appearing in the second case ($2a$), where $\bf{0}$ {\bf is to the right of the second-smallest element} and  the sign of $\sigma_2$ is {\bf arbitrary}.

Note that an element having the same absolute value as the second-smallest element but negative cannot appear to the right of $\bf{0}$.
Indeed, if $\sigma_2>0$ this will contradict the fact that we read two elements with the same absolute value but different signs starting with the negative one. On the other hand, if $\sigma_2<0$, this contradicts the fact that two negative elements are read from the right to the left.

\medskip

Our count has the form of a triple sum.  We first choose the absolute value of the second-smallest element $j$, which is located to the left of $\bf{0}$ (the outer sum), then we choose the location $i$ of $\bf{0}$, counted from the right (the middle sum, see Figure \ref{exam_case2a}), and then we choose the number of appearances $k$ of the absolute value of the second-smallest element appearing to the right of $\bf{0}$ (the inner sum). The multiplied terms in the sum are as follows:
\begin{itemize}
\item The term $\left[ (m-j+1)^{n-i}-(m-j)^{n-i}\right]$ is the number of ways to fill the places to the left of the $\bf{0}$ such that the value $j$ will appear at least once among them.

\item The term $2^{n-k-2}$ counts the number of ways to sign the $n-k-1$ elements, which are not $\bf{0}$ and are not among the $k$ positive appearances of the second-smallest element which are located to the right of $\bf{0}$, with the additional requirement of Case (2) that the total number of signs is odd.

\item The term ${i-1 \choose k}$ is the number of ways to choose the $k$ positions, out of the first $i-1$, which are to the right of $\bf{0}$, to be occupied by the positive appearances of the second-smallest element.

\item The term $(m-j)^{i-1-k}$ counts the number of ways to fill the $i-1-k$ remaining
places to the right of $\bf{0}$ with elements of larger absolute value than the second-smallest value.
\end{itemize}

\begin{figure}[!ht]
\begin{center}
{\begin{tikzpicture}

\node (a){$0$};

\node (b) at (-0.8,0) {X};

\node at (1.6,-1) {$i-1$};

\draw[line width=2pt]   (2.1,-0.3) -- (2.7,-0.3);

\draw[line width=2pt]   (1.3,-0.3) -- (1.9,-0.3);

\draw[line width=2pt]   (0.5,-0.3) -- (1.1,-0.3);

\draw[line width=2pt]   (-0.3,-0.3) -- (0.3,-0.3);

\draw[line width=2pt]   (-1.1,-0.3) -- (-0.5,-0.3);

\draw[line width=2pt]   (-1.9,-0.3) -- (-1.3,-0.3);

\draw[line width=2pt]   (-2.7,-0.3) -- (-2.1,-0.3);

\draw[line width=2pt]   (-3.5,-0.3) -- (-2.9,-0.3);

\draw [
    very thick,
    decoration={
        brace,
        mirror,
        raise=0.5cm
    },
    decorate
] (0.5,-0.1) -- (2.7,-0.1);

    \end{tikzpicture}}
\end{center}
\caption{}\label{exam_case2a}
\end{figure}

$$\sum\limits_{j=1}^m\sum\limits_{i=1}^{n-1}\sum\limits_{k=0}^{i-1} 2^{n-k-2}\left[ (m-j+1)^{n-i}-(m-j)^{n-i}\right]{i-1 \choose k} (m-j)^{i-1-k}=$$
$$\stackrel{(j \leftarrow m-j)}{=}\sum\limits_{j=0}^{m-1}\sum\limits_{i=1}^{n-1}\sum\limits_{k=0}^{i-1} 2^{n-k-2}\left[ (j+1)^{n-i}-j^{n-i}\right]{i-1 \choose k} j^{i-1-k}=$$
$$\stackrel{(binom)}{=}\sum\limits_{j=0}^{m-1}\sum\limits_{i=1}^{n-1}\left[ (j+1)^{n-i}-j^{n-i} \right]\cdot 2^{n-2}
\left(j+\frac{1}{2}\right)^{i-1}=$$
$$=\frac{1}{2}\sum\limits_{j=0}^{m-1}\sum\limits_{i=1}^{n-1}\left[ (2j+2)^{n-i}-(2j)^{n-i} \right]
(2j+1)^{i-1}=$$
$$=\frac{1}{2}\sum\limits_{j=0}^{m-1}\sum\limits_{i=1}^{n-1}\left[ (2j+2)^{n-i}(2j+1)^{i-1}-(2j)^{n-i}(2j+1)^{i-1} \right] =$$
{\tiny$$=\frac{1}{2}\sum\limits_{j=0}^{m-1}\left(\underbrace{- \left[(2j+1)^{n-1}-(2j+1)^{n-1}\right]}_{i=n} +\sum\limits_{i=1}^n\left[ (2j+2)^{n-i}(2j+1)^{i-1}-(2j)^{n-i}(2j+1)^{i-1} \right] \right) =$$}
$$=\frac{1}{2}\sum\limits_{j=0}^{m-1}\left(\sum\limits_{i=1}^n (2j+2)^{n-i}(2j+1)^{i-1}-
\sum\limits_{i=1}^n (2j)^{n-i}(2j+1)^{i-1} \right) =$$
$$\stackrel{(*)}{=}\frac{1}{2}\sum\limits_{j=0}^{m-1}\left(\frac{(2j+2)^n-(2j+1)^n}{(2j+2)-(2j+1)}-
\frac{(2j+1)^n-(2j)^n}{(2j+1)-(2j)} \right) =$$
$$=\frac{1}{2}\sum\limits_{j=0}^{m-1}\left((2j+2)^n-2(2j+1)^n+(2j)^n \right)=A,$$
where in $(*)$ we used the short multiplication formula:
$\frac{a^n-b^n}{a-b}=\sum\limits_{i=1}^n a^{n-i}b^{i-1}$.

\medskip

Next, we concentrate on Cases ($2b$)  and ($3$) together: in both cases, $\bf{0}$ {\bf is to the left of the second-smallest element, which is negative}. Since in Case ($2b$) the total number of negative signs is odd, while in Case ($3$) the total number of negative signs is even, in considering both cases together we may assume that the total number of negative signs is arbitrary.

As in the previous part,  our count of these two cases has the form of a triple sum. We first choose the second-smallest element $j$, which is located to the right of $\bf{0}$ (the outer sum), then we choose the location $i$ of $\bf{0}$, counted from the left (the middle sum, see Figure \ref{exam_case2b}), and then the number of appearances $k$ of the second-smallest element to the right of $\bf{0}$ (the inner sum), where its appearances to the left of $\bf{0}$ are counted in the terms of the sum. The multiplied terms in the sum are as follows:
\begin{itemize}
\item The term ${n-i \choose k}$ is the number of ways to choose the $k$ places out of the $n-i$ places to the right of $\bf{0}$, occupied by the elements having the same absolute value as the second-smallest element, including itself.
\item The term $2^k-1$ counts the number of ways to sign the $k$ elements having
the same absolute value as the second-smallest element appearing to the right of $\bf{0}$, excluding the unique possibility to sign all the elements as positive, since at least one of the appearances of the second-smallest element to the right of $\bf{0}$ should be negative.

\item The term $(2m-2j)^{n-i-k}$ is the number of ways to fill the places to the {\it right} of $\bf{0}$ with elements of larger absolute value than the second-smallest and with arbitrary signs.

\item The term $(2m-2j+2)^{i-1}$ counts the number of ways to fill the $i-1$ remaining
places to the {\it left} of $\bf{0}$ with elements of larger {\it or equal} absolute value to the second-smallest, and with arbitrary signs.
\end{itemize}

\begin{figure}[!ht]
\begin{center}
{\begin{tikzpicture}

\node (a){X};

\node (b) at (-0.8,0) {$0$};

\node at (-2.4,-1) {$i-1$};
\node at (1.2,-1) {$n-i$};

\draw[line width=2pt]   (2.1,-0.3) -- (2.7,-0.3);

\draw[line width=2pt]   (1.3,-0.3) -- (1.9,-0.3);

\draw[line width=2pt]   (0.5,-0.3) -- (1.1,-0.3);

\draw[line width=2pt]   (-0.3,-0.3) -- (0.3,-0.3);

\draw[line width=2pt]   (-1.1,-0.3) -- (-0.5,-0.3);

\draw[line width=2pt]   (-1.9,-0.3) -- (-1.3,-0.3);

\draw[line width=2pt]   (-2.7,-0.3) -- (-2.1,-0.3);

\draw[line width=2pt]   (-3.5,-0.3) -- (-2.9,-0.3);

\draw [
    very thick,
    decoration={
        brace,
        mirror,
        raise=0.5cm
    },
    decorate
] (-3.5,-0.1) -- (-1.3,-0.1);

\draw [
    very thick,
    decoration={
        brace,
        mirror,
        raise=0.5cm
    },
    decorate
] (-0.3,-0.1) -- (2.7,-0.1);

    \end{tikzpicture}}
\end{center}
\caption{}\label{exam_case2b}
\end{figure}

$$\sum\limits_{j=1}^m\sum\limits_{i=1}^{n-1}\sum\limits_{k=1}^{n-i} (2^k-1){n-i \choose k} (2m-2j)^{n-i-k} (2m-2j+2)^{i-1}=
$$
$$\stackrel{(j \leftarrow m-j)}{=} \sum\limits_{j=0}^{m-1}\sum\limits_{i=1}^{n-1}\sum\limits_{k=1}^{n-i} (2^k-1){n-i \choose k} (2j)^{n-i-k} (2j+2)^{i-1}=$$
{\tiny $$=\sum\limits_{j=0}^{m-1}\sum\limits_{i=1}^{n-1}(2j+2)^{i-1}\sum\limits_{k=1}^{n-i} 2^k{n-i \choose k} (2j)^{n-i-k}-
\sum\limits_{j=0}^{m-1}\sum\limits_{i=1}^{n-1}(2j+2)^{i-1}\sum\limits_{k=1}^{n-i} {n-i \choose k} (2j)^{n-i-k} =
$$
$$\stackrel{(binom)}{=}\sum\limits_{j=0}^{m-1}\sum\limits_{i=1}^{n-1}(2j+2)^{i-1} [(2j+2)^{n-i} -\underbrace{(2j)^{n-i}}_{k=0} ] \ \  -
\sum\limits_{j=0}^{m-1}\sum\limits_{i=1}^{n-1}(2j+2)^{i-1}[(2j+1)^{n-i} -\underbrace{(2j)^{n-i}}_{k=0} ] =
$$}
$$=\sum\limits_{j=0}^{m-1}\sum\limits_{i=1}^{n-1}(2j+2)^{i-1} \left[(2j+2)^{n-i} -(2j+1)^{n-i} \right]= $$
$$=\sum\limits_{j=0}^{m-1}\sum\limits_{i=1}^{n-1}\left[(2j+2)^{n-1} -  (2j+2)^{i-1} (2j+1)^{n-i} \right] =  $$
$$=\sum\limits_{j=0}^{m-1}\left( (n-1)(2j+2)^{n-1}  - \sum\limits_{i=1}^{n-1}  (2j+2)^{i-1} (2j+1)^{n-i} \right)= $$
$$=\sum\limits_{j=0}^{m-1}\left( (n-1)(2j+2)^{n-1}  - \left(\sum\limits_{i=1}^{n}  (2j+2)^{i-1} (2j+1)^{n-i}\right) +\underbrace{(2j+2)^{n-1}}_{i=n}\right)=  $$
$$\stackrel{(*)}{=}\sum\limits_{j=0}^{m-1}\left( n(2j+2)^{n-1}-
\frac{(2j+2)^n-(2j+1)^n}{(2j+2)-(2j+1)}\right)=  $$
$$=\sum\limits_{j=0}^{m-1}\left[ n(2j+2)^{n-1} -(2j+2)^n+(2j+1)^n\right]= B,$$
where in $(*)$ we used the short multiplication formula:
$\frac{a^n-b^n}{a-b}=\sum\limits_{i=1}^n a^{n-i}b^{i-1}$.

\medskip

We now sum up together Cases (2) and (3) (counted by expressions $A$ and $B$):
{\tiny $$\underbrace{\frac{1}{2}\sum\limits_{j=0}^{m-1}\left((2j+2)^n-\underline{2(2j+1)^n}+(2j)^n \right)}_{=A}+\underbrace{\sum\limits_{j=0}^{m-1}\left[ n(2j+2)^{n-1} -(2j+2)^n+\underline{(2j+1)^n}\right]}_{=B}=$$}
$$=\sum\limits_{j=0}^{m-1}\left( \frac{1}{2}((2j+2)^n+(2j)^n)+ n(2j+2)^{n-1}-(2j+2)^n \right)=$$
$$=\sum\limits_{j=0}^{m-1}\left( n(2j+2)^{n-1}+
\frac{1}{2}((2j)^n-(2j+2)^n) \right)=$$
$$=\sum\limits_{j=0}^{m-1} 2^{n-1}n(j+1)^{n-1}+\sum\limits_{j=0}^{m-1}
\frac{1}{2}((2j)^n-(2j+2)^n) \stackrel{telescopic}{=}$$
$$=2^{n-1}n\sum\limits_{j=0}^{m-1} (j+1)^{n-1}-\frac{1}{2}(2m)^n=2^{n-1}n\sum\limits_{j=0}^{m-1} (j+1)^{n-1}-2^{n-1}m^n$$
Adding the number of the vectors of Case (1), which is clearly $2^{n-1}m^n$ (half the total number of vectors not containing $\bf{0}$), yields the total number of vectors not associated to any $D_n$-permutation, namely: $$2^{n-1}n\sum\limits_{j=0}^{m-1} (j+1)^{n-1}.$$
This completes the proof of Theorem \ref{worp type D} as well.
\end{proof}

The proof of Theorem \ref{worp type D with q} is based on Lemma \ref{pre image for D} and on the following lemma:

\begin{lem} \label{missing vectors with q}
The weight contributed by the vectors not associated to any $D_n$-permutation by Algorithm \ref{D_n algorithm} is $$(1+q)^{n-1}n\sum\limits_{j=0}^{m-1} (j+1)^{n-1}.$$
\end{lem}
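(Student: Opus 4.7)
The plan is to follow the three-case structure of the proof of Lemma \ref{missing vectors}, replacing each free sign choice (originally contributing a factor $2$) by the $q$-weight $(1+q)$, and each odd-parity constraint on $N$ signs (originally contributing $2^{N-1}$) by $\frac{(1+q)^N-(1-q)^N}{2}$. A simplifying observation is that every missing vector that contains a ${\bf 0}$ contains exactly one, which is the smallest entry of $\vec{v}$ in the defined order; so for such vectors ${\rm neg}_2(\vec{v})={\rm neg}(\vec{v})$ and the $q$-weight factorises as a product over positions.

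I would first treat Cases $(2b)$ and $(3)$ jointly, as their union drops the parity constraint. Performing the substitutions $2m-2j \mapsto j(1+q)$, $2m-2j+2\mapsto (j+1)(1+q)$, $2^k-1\mapsto (1+q)^k-1$ (and $j\leftarrow m-j$) in the triple sum, the inner $k$-sum collapses by the binomial theorem, and the $i$-sum collapses by the geometric-series identity $\sum_{l=0}^{n-1} X^l Y^{n-1-l} = (X^n-Y^n)/(X-Y)$ applied with $X = (j+1)(1+q)$, $Y = 1+j(1+q)$ (for which $X-Y = q$), yielding
\[
B_q = n(1+q)^{n-1}\sum_{j=0}^{m-1}(j+1)^{n-1}-\frac{1}{q}\sum_{j=0}^{m-1}\bigl[((j+1)(1+q))^n-(1+j(1+q))^n\bigr].
\]

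Next, for Case $(2a)$ the odd-parity factor splits the computation into a $(1+q)$-half and a $(1-q)$-half, each reduced by analogous inner-sum collapses (now using both $W_j-Z_j=q$ and $X_j-Z_j=-1$ in the notation $W_j=(j+1)(1+q)$, $Z_j=1+j(1+q)$, $X_j=j(1+q)$, together with the $q\mapsto -q$ analogues $\widetilde W_j,\widetilde Z_j,\widetilde X_j$). This gives $A_q$ as an explicit linear combination of $W_j^n,Z_j^n,X_j^n$ and their tildes with rational coefficients in $q$. Case $(1)$ (no ${\bf 0}$, odd negatives) is the most delicate, since here ${\rm neg}_2={\rm neg}-\tau$ with $\tau=1$ exactly when the smallest entry of $\vec{v}$ is negative: I would split the count by the sign of the smallest entry. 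In sub-case $A$ the smallest is some $+a$ and $-a$ is absent, while in sub-case $B$ the smallest is some $-a$ and the parity constraint interacts with the number $s$ of $-a$'s through a $(-1)^s$ factor. Each sub-sum reduces, by the same binomial-plus-geometric manipulations, to a closed form in the same six quantities.

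Finally, summing $A_q+B_q+\text{Case }(1)_q$ and collecting the coefficients of each of the six $n$th-power expressions $W_j^n,Z_j^n,X_j^n,\widetilde W_j^n,\widetilde Z_j^n,\widetilde X_j^n$ as well as $(1+j)^n$ and $j^n$, one verifies that every coefficient vanishes—using $W_j^n=(j+1)^n(1+q)^n$ and $X_j^n=j^n(1+q)^n$ (and the tilde versions) at the last step—and what remains is precisely $n(1+q)^{n-1}\sum_{j=0}^{m-1}(j+1)^{n-1}$, which proves the lemma. The main obstacle is the bookkeeping in Case $(1)$: the $(-1)^s$ factor produced by the parity constraint interacts intricately with the sign of the smallest entry, introducing $1/q$ terms that must be matched against counterparts in $A_q$ and $B_q$; together with the final term-by-term cancellation, which has no evident shortcut though each individual step is routine.
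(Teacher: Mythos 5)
Your plan follows the paper's three-case skeleton (Case (1): no zero and odd sign parity; Case (2a); Cases (2b)+(3) merged), and your treatment of Cases (2b)+(3) is essentially identical to the paper's: since the parity constraint disappears there, every sign is free and contributes $(1+q)$, and your closed form for $B_q$ (with $X-Y=q$ replacing the paper's $a-b=1$) is correct --- I checked it against small cases. Where you genuinely diverge is in the parity-constrained cases. The paper simply replaces each $2$ of Lemma \ref{missing vectors} by $(1+q)$, in particular turning the odd-parity sign count $2^{N-1}$ into $(1+q)^{N-1}$ and the Case (1) total into $(1+q)^{n-1}m^n$. You instead use the literal generating function $\tfrac{(1+q)^N-(1-q)^N}{2}$. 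Your version is the honest one: the paper's per-case expressions are \emph{not} the actual $q^{{\rm neg}_2}$-weights of the corresponding vector classes (e.g.\ for $n=2$, $m=2$ the eight Case (1) vectors carry total weight $6+2q$, not $4+4q$, and the two Case (2a) vectors carry $2q$, not $2$); the paper's formulas only agree with the truth after summing Cases (1) and (2a), because the errors introduced by the naive $2\mapsto(1+q)$ substitution in those two cases cancel. So your route buys a per-case-correct computation at the price of carrying the $(1-q)$-halves (your tilde quantities) and the $1/q$ terms all the way to the end, whereas the paper buys a verbatim transcription of the $q=1$ algebra at the price of intermediate expressions that are not what they claim to count.

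The weakness of your proposal is that the decisive computations are asserted rather than performed. You correctly observe that a missing vector with a zero has exactly one zero, which is the minimum, so ${\rm neg}_2={\rm neg}$ there, and that Case (1) needs the correction $\tau$; but the Case (1) evaluation (with its split on the sign of the minimum and the inclusion--exclusion over the number $s$ of copies of $-a$), the $(1-q)$-half of Case (2a), and the final verification that all coefficients of $W_j^n,Z_j^n,X_j^n,\widetilde W_j^n,\widetilde Z_j^n,\widetilde X_j^n$ cancel are exactly where the work lies, and none of it is carried out. Since the lemma is true and your per-case expressions are the genuine weights, the cancellation must occur, so the plan will succeed; but as written it is a blueprint, not a proof. (To be fair, the paper's own proof is at least as incomplete: it ends with ``applying manipulations similar to the ones we used,'' and its unconstrained-substitution shortcut in Cases (1) and (2a) is never justified.)
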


\begin{proof}[Proof of Lemma \ref{missing vectors with q}]

As in Lemma \ref{missing vectors}, there are three types of 'missing' vectors. We now count their contribution with regard to  the $q$-analogue.

\begin{enumerate}
    \item[Case (1):]
We put $q$ for each negative term in $\vec{v}$ except for the smallest one and consider two cases:
\begin{enumerate}
\item If the sign of the smallest element is {\bf positive}, then we have $m$ options to choose its value and the other elements contribute $\frac{((1+q)m)^{n-1}}{2} \cdot m$, since the number of negative elements must be odd.
\item If the sign of the smallest element is {\bf negative}, then after choosing the smallest element, the number of negatives among the remaining elements is even so we have again: $\frac{((1+q)m)^{n-1}}{2} \cdot m$.
\end{enumerate}
In total we have $(1+q)^{n-1}m^n$.

\medskip

\item[Case (2a):] We have the following triple sum:
{\tiny $$\sum\limits_{j=1}^m\sum\limits_{i=1}^{n-1}\sum\limits_{k=0}^{i-1} (1+q)^{n-k-2}\left[ (m-j+1)^{n-i}-(m-j)^{n-i}\right]{i-1 \choose k} (m-j)^{i-1-k}.$$}

\medskip

\item[Cases (2b)+(3):] We have the following triple sum:
{\tiny $$\sum\limits_{j=1}^m\sum\limits_{i=1}^{n-1}\sum\limits_{k=1}^{n-i} ((1+q)^k-1){n-i \choose k} ((1+q)m-(1+q)j)^{n-i-k} ((1+q)m-(1+q)j+(1+q))^{i-1}
.$$}
\end{enumerate}

\medskip

Applying manipulations similar to the ones we used in Lemma \ref{missing vectors}, while replacing $2$ by $(1+q)$, yields the required result.
\end{proof}

\section*{Acknowledgments}

In order to overcome the messy triple sums in Lemma \ref{missing vectors}, we used both the Mathematica and Maple symbolic computation abilities. We want to thank Aharon Naiman for writing a Mathematica program and Doron Zeilberger, Noah Dana-Picard and Ilias Kotsireas for helping us with the Maple.

\medskip

\end{document}